% ****** Start of file aipsamp.tex ******
%
%   This file is part of the AIP files in the AIP distribution for REVTeX 4.
%   Version 4.1 of REVTeX, October 2009
%
%   Copyright (c) 2009 American Institute of Physics.
%
%   See the AIP README file for restrictions and more information.
%
% TeX'ing this file requires that you have AMS-LaTeX 2.0 installed
% as well as the rest of the prerequisites for REVTeX 4.1
%
% It also requires running BibTeX. The commands are as follows:
%
%  1)  latex  aipsamp
%  2)  bibtex aipsamp
%  3)  latex  aipsamp
%  4)  latex  aipsamp
%
% Use this file as a source of example code for your aip document.
% Use the file aiptemplate.tex as a template for your document.
\documentclass[%
 aip,
 jmp,%
 amsmath,amssymb,
%preprint,%
 reprint,%
%author-year,%
%author-numerical,%
]{revtex4-1}
\usepackage{amsfonts}
\usepackage{tikz}
\usetikzlibrary{arrows,automata}
\usepackage[latin1]{inputenc}
\usepackage{verbatim}
\usepackage{amsmath}
\usepackage{mathptmx}
\usepackage{graphicx}% Include figure files
\usepackage{dcolumn}% Align table columns on decimal point
\usepackage{bm}% bold math
\newtheorem{theorem}{\textbf{Theorem}}

\newtheorem{definition}{\textbf{Definition}}

\newtheorem{proof}{\textbf{Proof}}
%\usepackage[mathlines]{lineno}% Enable numbering of text and display math
%\linenumbers\relax % Commence numbering lines

\begin{document}

\preprint{AIP/123-QED}

\title[]{ENERGY EVOLUTION OF MULTI-SYMPLECTIC METHODS FOR MAXWELL EQUATIONS
WITH PERFECTLY MATCHED LAYER BOUNDARY\footnote{Error!}}% Force line breaks with \\
\thanks{Authors are supported by NNSFC (NO. 91130003,
NO. 11021101 and NO. 11290142).}

\author{Jialin Hong}
 \email{hjl@lsec.cc.ac.cn.}
 
\author{Lihai Ji}
 \email{jilihai@lsec.cc.ac.cn.}
\affiliation{%
State Key Laboratory of Scientific and Engineering Computing,
Institute of Computational Mathematics and Scientific/Engineering
Computing, Academy of Mathematics and Systems Science, Chinese
Academy of Sciences, 100080 Beijing, People's Republic of China%\\This line break forced% with \\
}%

\date{\today}% It is always \today, today,
             %  but any date may be explicitly specified

\begin{abstract}
In this paper, we consider the energy evolution of multi-symplectic
methods for three-dimensional (3D) Maxwell equations with perfectly
matched layer boundary, and present the energy evolution laws of
Maxwell equations under the discretization of multi-symplectic Yee
method and general multi-symplectic Runge-Kutta methods.
\end{abstract}

\pacs{35Q61, 65Z05, 70H05}% PACS, the Physics and Astronomy
                             % Classification Scheme.
\keywords{energy evolution, Maxwell equations, perfectly
matched layer, multi-symplectic Yee method, multi-symplectic
Runge-Kutta methods}%Use showkeys class option if keyword
                              %display desired
\maketitle

\section{Introduction}
\label{intro}Maxwell equations are the most foundational equations
in electromagnetism and are widely applied to many application
fields, such as aeronautics, electronics, and biology
\cite{RefA,RefB}, etc. They are mathematical expressions of the
natural laws correlative fields, such as Amp\`{e}re's law and
Faraday's law \cite{RefC}. On the other hand, in lossless medium,
the electromagnetic energy of the wave is constant at different
times \cite{RefI}. As we all know, to preserve the energy is greatly
important in constructing numerical schemes for different physical
problems. However, numerical methods, with some boundary conditions,
can not preserve the energy exactly in general cases. Therefore, it
is important and necessary to investigate the energy evolution of
Maxwell equations under numerical discretization with some boundary
conditions. The purpose of this paper is to study the energy
evolution of multi-symplectic methods for 3D Maxwell's equations
with perfectly matched layer (PML) boundary.

It has been recognized that symplectic structure-preserving
numerical methods have significant superiority than non-symplectic
methods in numerical solving Hamiltonian ordinary differential
equations (ODEs) and Hamiltonian partial differential equations
(PDEs) \cite{RefD}. At the end of last century, symplectic
integrators have been generalized to multi-symplectic ones
\cite{RefI,RefJ,RefE,RefF,RefG,RefK,RefH}. And multi-symplectic
integrators have been applied to Maxwell equations. For examples,
\cite{RefH} discussed the self-adjointness of the Maxwell equations
with variable coefficients $\epsilon$ and $\mu$, and showed that the
equations have the multi-symplectic structure. \cite{RefI} firstly
proposed an unconditionally stable, energy-conserved, and
computational efficiently scheme for two-dimensional (2D) Maxwell
equations with an isotropic and lossless medium. The further
analysis in the case of 3D was studied in \cite{RefJ}. Meanwhile,
\cite{RefK} proposed a kind of splitting multi-symplectic
integrators method for Maxwell equations in three dimensions, which
was proved to be unconditionally stable, non-dissipative, and of
first order accuracy in time and second order accuracy in space.

It is well known that the PML boundary conditions are widely applied
to the numerical simulation Maxwell equations. In 1993, Berenger
\cite{RefP,RefQ} firstly proposed the PML technique, which is based
on modifying the PDEs away from all physical boundaries such that
absorbing outgoing waves from the computation domain. It is a simple
and straightforward technique, easily implemented for both two and
three space dimensions using either cartesian or cylindrical
coordinates. However, to the best of our knowledge, the
investigation of multi-symplectic methods for Maxwell equations with
PML boundary does not exist. In this paper, inspired by this
problem, we investigate the energy evolution of general
multi-symplectic methods for Maxwell equations with Berenger's PML
boundary.

The rest of this paper is organized as follows. In Section 2, we
begin with some preliminary results about 3D Maxwell equations and
Berenger's PML systems. An equivalent formulation to Berenger's PML
systems is introduced in Section 3. In Section 4, we present the
energy evolution laws of multi-symplectic Yee method and general
multi-symplectc Runge-Kutta methods for 3D Maxwell equations with
PML boundary.
\section{Preliminary results}
\label{sec:1} Notations. We denote by $(\cdot , \cdot)$ the $L^{2}$
scalar product, $\|\cdot\|_{H^{s}}$ the norm in $H^{s}$.
\subsection{3D Maxwell equations}
\label{sec:2} For a linear homogeneous medium within linear
isotropic material with the permittivity $\varepsilon$ and the
permeability $\mu$, the scattering of electromagnetic waves without
the charges or the currents can be described by the 3D Maxwell
equations in curl formulation
\begin{eqnarray}
\frac{\partial \textbf{E}}{\partial
t}&=&\frac{1}{\varepsilon}\nabla\times\textbf{H}, \label{2.1.1}\\
\frac{\partial \textbf{H}}{\partial
t}&=&-\frac{1}{\mu}\nabla\times\textbf{E}, \label{2.1.2}
\end{eqnarray}
where $\textbf{E}=(E_{x}, E_{y}, E_{z})^{T}$ and $\textbf{H}=(H_{x},
H_{y}, H_{z})^{T}$ represent the electric field and the magnetic
field, respectively. The domain $\Omega\times[0, T]=[0, a]\times[0,
b]\times[0, c]\times[0, T]$ under consideration is occupied by this
medium and surrounded by perfect conductors.

The curl equations (\ref{2.1.1}) and (\ref{2.1.2}) can be written as the
componentwise formula
\begin{equation}\label{2.1.3}
 \frac{\partial}{\partial t}
\left[\begin{array}{ccccccc}
E_{x}\\[1mm]
E_{y}\\[1mm]
E_{z}\\[1mm]
H_{x}\\[1mm]
H_{y}\\[1mm]
H_{z}\\[1mm]
\end{array}\right]=\left[\begin{array}{ccccccc}
\frac{1}{\varepsilon}(\frac{\partial H_{z}}{\partial y}-\frac{\partial H_{y}}{\partial z})\\[3mm]
\frac{1}{\varepsilon}(\frac{\partial H_{x}}{\partial z}-\frac{\partial H_{z}}{\partial x})\\[3mm]
\frac{1}{\varepsilon}(\frac{\partial H_{y}}{\partial x}-\frac{\partial H_{x}}{\partial y})\\[3mm]
\frac{1}{\mu}(\frac{\partial E_{y}}{\partial z}-\frac{\partial E_{z}}{\partial y})\\[3mm]
\frac{1}{\mu}(\frac{\partial E_{z}}{\partial x}-\frac{\partial E_{x}}{\partial z})\\[3mm]
\frac{1}{\mu}(\frac{\partial E_{x}}{\partial y}-\frac{\partial E_{y}}{\partial x})\\[3mm]
\end{array}\right].
\end{equation}

When the medium is lossless, then by Green's formula it gets the
following invariants:
\begin{eqnarray}
Energy ~~&I&: \int_{\Omega}(\varepsilon|\textbf{E}(\textbf{x},t)|^{2}+\mu|\textbf{H}(\textbf{x},t)|^{2})d\Omega=Constant,\nonumber\\
Energy ~~&II&: \int_{\Omega}(\varepsilon|\frac{\partial \textbf{E}(\textbf{x},t)}{\partial t}|^{2}+\mu|\frac{\partial \textbf{H}(\textbf{x},t)}{\partial t}|^{2})d\Omega=Constant. \nonumber
\end{eqnarray}

The first invariant is called Poynting theorem in
electromagnetism and it can be easily verified, and the second is a
little more complex. For more details, see \cite{RefI}.

In the 2D transverse electric (TE) polarization case, the electric
and magnetic field read $\textbf{E}=(E_{x}, E_{y}, 0)^{T}$,
$\textbf{H}=(0, 0, H_{z})^{T}$. Therefore, the Maxwell equations
(\ref{2.1.1}) and (\ref{2.1.2}) become
\begin{eqnarray}\label{2.1.6}
\left\{ \begin{array}{cccc}
\frac{\partial E_{x}}{\partial t}&=&\frac{\partial H_{z}}{\partial
y},\\[2mm]
\frac{\partial E_{y}}{\partial t}&=&-\frac{\partial H_{z}}{\partial
x},\\[2mm]
\frac{\partial H_{z}}{\partial t}&=&\frac{\partial E_{x}}{\partial
y}-\frac{\partial E_{y}}{\partial x}.
\end{array}\right.
\end{eqnarray}

Let $Z=(H_{x},H_{y},H_{z},E_{x},E_{y},E_{z})^{T}$. Then the
componentwise formula (\ref{2.1.3}) is  multi-symplectic, i.e.,
\begin{equation}\label{2.1.7}
MZ_{t}+K_{1}Z_{x}+K_{2}Z_{y}+K_{3}Z_{z}=0,
\end{equation}
where
\begin{small}
$$M=\left(\begin{array}{ccccccc}
0&-I_{3\times3}\\[1mm]
I_{3\times3}&0\\[1mm]
\end{array}\right),
K_{p}=\left(\begin{array}{cccccccc}
\varepsilon^{-1}R_{p}&0\\[1mm]
0&\mu^{-1}R_{p}\\[1mm]
\end{array}
\right),~~\forall p=1, 2, 3.$$
\end{small}

The sub-matrix $I_{3\times3}$ is a
$3\times3$ identity matrix and
\begin{small}
$$
R_{1}=\left(\begin{array}{ccccccc}
0&0&0\\[1mm]
0&0&-1\\[1mm]
0&1&0\\[1mm]
\end{array}\right),
R_{2}=\left(\begin{array}{ccccccc}
0&0&1\\[1mm]
0&0&0\\[1mm]
-1&0&0\\[1mm]
\end{array}\right),
R_{3}=\left(\begin{array}{ccccccc}
0&-1&0\\[1mm]
1&0&0\\[1mm]
0&0&0\\[1mm]
\end{array}\right).$$
\end{small}

The multi-symplectic formulation in (\ref{2.1.7}) preserves the following
multi-symplectic structure
\begin{equation}\label{2.1.8}
\frac{\partial}{\partial t}dZ\wedge MdZ+\sum^{3}_{p=1}\frac{\partial}{\partial x_{p}}dZ\wedge K_{p}dZ=0,
\end{equation}
where $dZ$ is the solution of the variational equation
associated with (\ref{2.1.7}).

Let $[\partial_{t}]^{n}_{i,j,k}$ and
$[\partial_{x_{p}}]^{n}_{i,j,k}$ denote the discretization of
$\frac{\partial}{\partial t}$ and $\frac{\partial}{\partial x_{p}}$
(for $p=1,2,3$), where $n$ is the temporal index and $i,j,k$ are the
spatial indices in the discrete system. For the multi-symplectic
Hamiltonian PDEs (\ref{2.1.7}), we consider the following numerical
discretization
\begin{equation}\label{2.1.9}
M[\partial_{t}]^{n}_{i,j,k}Z^{n}_{i,j,k}+\sum^{3}_{p=1}K_{p}[\partial_{x_{p}}]_{i,j,k}^{n}Z^{n}_{i,j,k}=0.
\end{equation}

\begin{definition}
The numerical method (\ref{2.1.9}) is
multi-symplectic if it preserves the discrete version of the
multi-symplectic structure in (\ref{2.1.8}). That is,
\begin{small}
\begin{equation}\label{2.1.10}
[\partial_{t}]^{n}_{i,j,k}\Big(dZ^{n}_{i,j,k}\wedge MdZ^{n}_{i,j,k}\Big)+\sum^{3}_{p=1}[\partial_{x_{p}}]^{n}_{i,j,k}\Big(dZ^{n}_{i,j,k}\wedge K_{p}dZ^{n}_{i,j,k}\Big)=0. \end{equation}
\end{small}
\end{definition}

From the multi-symplectic Hamiltonian formulation given by (\ref{2.1.7}),
of which its solution preserves the multi-symplectic structure
(\ref{2.1.8}). Now, we list several multi-symplectic numerical schemes
applied to Maxwell equations given in
this section.\\

 $\bullet$ Yee method

This method is the basis of the highly popular numerical methods
known as the FDTD methods \cite{RefC}. Yee method is constructed by
central difference in both space and time based on a half-step
staggered grid. It is a second-order method and is conditionally
stable.
Recently, \cite{RefS} showed that Yee method is multi-symplectic by the discrete variational principle, so we
call it the multi-symplectic Yee method.\\

 $\bullet$ Multi-symplectic Runge-Kutta methods

Applying the symplectic Runge-Kutta methods in both time and space
to Maxwell equations (\ref{2.1.7}) leads to the multi-symplectic
Runge-Kutta methods. \cite{RefF} presented the sufficient conditions
of multi-symplecticity Runge-Kutta methods for Hamiltonian PDEs.

\subsection{Berenger's PML system for 3D Maxwell equations}
\label{sec:3}In the PML medium, each component of electromagnetic
field is split into two parts. In cartesian coordinates, the six
components yield 12 subcomponents denoted by $E_{xy}$, $E_{xz}$,
$E_{yx}$, $E_{yz}$, $E_{zx}$, $E_{zy}$, $H_{xy}$, $H_{xz}$,
$H_{yx}$, $H_{yz}$, $H_{zx}$, $H_{zy}$, and the Maxwell equations
read,
\begin{eqnarray}
\varepsilon\frac{\partial E_{xy}}{\partial
t}+\sigma_{y}E_{xy}&=&\frac{\partial (H_{zx}+H_{zy})}{\partial y},\label{2.2.a}\\
\varepsilon\frac{\partial E_{xz}}{\partial
t}+\sigma_{z}E_{xz}&=&-\frac{\partial (H_{yz}+H_{yx})}{\partial z},\label{2.2.b}\\
\varepsilon\frac{\partial E_{yz}}{\partial
t}+\sigma_{z}E_{yz}&=&\frac{\partial (H_{xy}+H_{xz})}{\partial z},\label{2.2.c}\\
\varepsilon\frac{\partial E_{yx}}{\partial
t}+\sigma_{x}E_{yx}&=&-\frac{\partial (H_{zx}+H_{zy})}{\partial x},\label{2.2.d}\\
\varepsilon\frac{\partial E_{zx}}{\partial
t}+\sigma_{x}E_{zx}&=&\frac{\partial (H_{yz}+H_{yx})}{\partial x},\label{2.2.e}\\
\varepsilon\frac{\partial E_{zy}}{\partial
t}+\sigma_{y}E_{zy}&=&-\frac{\partial (H_{xy}+H_{xz})}{\partial y},\label{2.2.f}\\
\mu\frac{\partial H_{xy}}{\partial
t}+\sigma_{y}^{\ast}H_{xy}&=&-\frac{\partial (E_{zx}+E_{zy})}{\partial
y}, \label{2.2.g}\\
\mu\frac{\partial H_{xz}}{\partial
t}+\sigma_{z}^{\ast}H_{xz}&=&\frac{\partial (E_{yz}+E_{yx})}{\partial
z}, \label{2.2.h}\\
\mu\frac{\partial H_{yz}}{\partial
t}+\sigma_{z}^{\ast}H_{yz}&=&-\frac{\partial (E_{xy}+E_{xz})}{\partial
z}, \label{2.2.i}\\
\mu\frac{\partial H_{yx}}{\partial
t}+\sigma_{x}^{\ast}H_{yx}&=&\frac{\partial (E_{zx}+E_{zy})}{\partial
x}, \label{2.2.j}\\
\mu\frac{\partial H_{zx}}{\partial
t}+\sigma_{x}^{\ast}H_{zx}&=&-\frac{\partial (E_{yz}+E_{yx})}{\partial
x}, \label{2.2.k}\\
\mu\frac{\partial H_{zy}}{\partial
t}+\sigma_{y}^{\ast}H_{zy}&=&\frac{\partial (E_{xy}+E_{xz})}{\partial
y}, \label{2.2.l}
\end{eqnarray}
where the parameters $(\sigma_{x}, \sigma_{y},
\sigma_{z}, \sigma_{x}^{\ast}, \sigma_{y}^{\ast},
\sigma_{z}^{\ast})$ are homogeneous to electric and magnetic
conductivities.

If $\sigma_{x}=\sigma_{y}=\sigma_{z}$ and
$\sigma_{x}^{\ast}=\sigma_{y}^{\ast}=\sigma_{z}^{\ast}=0$, then
(\ref{2.2.a})-(\ref{2.2.l}) yield the classical Maxwell equations
(\ref{2.1.1})-(\ref{2.1.2}). Thus, the absorbing medium defined by (\ref{2.2.a})-(\ref{2.2.l})
 holds as particular cases of all usual media
(vacuum, conductive media).

The 3D PML technique is a straightforward generalization of the 2D
case \cite{RefP}. The Maxwell equations are solved by the FDTD
method within a computational domain surrounded by an absorbing
layer which is an aggregate of PML media.

\section{One formulation equivalent to Berenger's formulation}
\label{sec:4}Berenger's formulation involves a splitting of the
unknown electromagnetic fields. The idea of \cite{RefR} is to
restore the usual operator by introducing a new variable.

Let us consider the Berenger's PML parrel $xoy$-plane, i.e.,
$\sigma_{x}=\sigma_{x}^{\ast}=0$ and
$\sigma_{y}=\sigma_{y}^{\ast}=0$. For simplicity, we assume that
$\varepsilon=\mu \equiv1$ and
$\sigma_{z}=\sigma_{z}^{\ast}\equiv\sigma$, where $\sigma$ is a
constant that does not depend on $x$, $y$, $z$, $t$. Then Berenger's
systems (\ref{2.2.a})-(\ref{2.2.l}) of the 3D Maxwell equations can be
rewritten as
\begin{eqnarray}
\frac{\partial E_{xy}}{\partial
t}&=&\frac{\partial (H_{zx}+H_{zy})}{\partial y}, \label{3.a}\\
\frac{\partial E_{xz}}{\partial
t}+\sigma E_{xz}&=&-\frac{\partial (H_{yz}+H_{yx})}{\partial z},\label{3.b}\\
\frac{\partial E_{yz}}{\partial
t}+\sigma E_{yz}&=&\frac{\partial (H_{xy}+H_{xz})}{\partial z},\label{3.c}\\
\frac{\partial E_{yx}}{\partial t}&=&-\frac{\partial
(H_{zx}+H_{zy})}{\partial x},\label{3.d}\\
\frac{\partial E_{zx}}{\partial
t}&=&\frac{\partial (H_{yz}+H_{yx})}{\partial x}, \label{3.e}\\
\frac{\partial E_{zy}}{\partial
t}&=&-\frac{\partial (H_{xy}+H_{xz})}{\partial y}, \label{3.f}\\
\frac{\partial H_{xy}}{\partial
t}&=&-\frac{\partial (E_{zx}+E_{zy})}{\partial y}, \label{3.g}\\
\frac{\partial H_{xz}}{\partial
t}+\sigma H_{xz}&=&\frac{\partial (E_{yz}+E_{yx})}{\partial z},\label{3.h}\\
\frac{\partial H_{yz}}{\partial
t}+\sigma H_{yz}&=&-\frac{\partial (E_{xy}+E_{xz})}{\partial z}, \label{3.i}\\
\frac{\partial H_{yx}}{\partial
t}&=&\frac{\partial (E_{zx}+E_{zy})}{\partial x}, \label{3.j}\\
\frac{\partial H_{zx}}{\partial
t}&=&-\frac{\partial (E_{yz}+E_{yx})}{\partial x}, \label{3.k}\\
\frac{\partial H_{zy}}{\partial
t}&=&\frac{\partial (E_{xy}+E_{xz})}{\partial y}. \label{3.l}
\end{eqnarray}

Let $E_{xy}, E_{xz}, E_{yx}, E_{yz}, E_{zx}, E_{zy}, H_{xy}, H_{xz},
H_{yx}, H_{yz}, H_{zx}, H_{zy}$ be a solution of Berenger's system
(\ref{3.a})-(\ref{3.l}) with initial conditions $\textbf{E}^{0}$,
$\textbf{H}^{0}$. Adding (\ref{3.e}) and (\ref{3.f}), (\ref{3.k}) and (\ref{3.l}),
respectively, we can obtain that
\begin{eqnarray}
\frac{\partial E_{z}}{\partial
t}&=&\frac{\partial H_{y}}{\partial x}-\frac{\partial H_{x}}{\partial
y}, \label{3.1}\\
\frac{\partial H_{z}}{\partial
t}&=&\frac{\partial E_{x}}{\partial y}-\frac{\partial E_{y}}{\partial
x}. \label{3.2}
\end{eqnarray}

Applying $\partial_{t}+\sigma$ to (\ref{3.a}), $\partial_{t}$ to
(\ref{3.b}), and adding the two terms give
\begin{equation}
\partial_{t}(\partial_{t}+\sigma)E_{xy}+\partial_{t}(\partial_{t}+\sigma)E_{xz}+\partial_{t}\partial_{z}H_{y}-(\partial_{t}+\sigma)\partial_{y}H_{z}=0.\nonumber
\end{equation}

Since $\sigma$ does not depend on $y$, the operators
$\partial_{t}+\sigma$ and $\partial_{y}$ commute, we get by setting
$E_{x}=E_{xy}+E_{xz}$
\begin{equation}
\partial_{t}[(\partial_{t}+\sigma)E_{x}+\partial_{z}H_{y}]-\partial_{y}(\partial_{t}+\sigma)H_{z}=0.\nonumber
\end{equation}

In order to transform the last term into a time derivative, we
introduce a new variable $\widetilde{H_{z}}$ satisfying
\begin{equation}\label{3.3}
(\partial_{t}+\sigma)H_{z}=\partial_{t}\widetilde{H_{z}},
\end{equation}
then
\begin{equation}
\partial_{t}[(\partial_{t}+\sigma)E_{x}+\partial_{z}H_{y}-\partial_{y}\widetilde{H_{z}}]=0.\nonumber
\end{equation}

If we make the assumption that at $t=0$,
\begin{equation}\label{3.4}
(\partial_{t}E_{x})^{0}+\sigma E_{x}^{0}+\partial_{z}H_{y}^{0}-\partial_{y}\widetilde{H_{z}^{0}}=0,
\end{equation}
it follows
\begin{equation}\label{3.5}
(\partial_{t}+\sigma)E_{x}+\partial_{z}H_{y}-\partial_{y}\widetilde{H_{z}}=0.
\end{equation}

Note that (\ref{3.3}) can not completely determine $\widetilde{H_{z}}$. We
have to prescribe an initial value of $\widetilde{H_{z}}$, say
\begin{equation}\label{3.6}
\widetilde{H_{z}^{0}}=H_{z}^{0},
\end{equation}
which in particular implies that $\widetilde{H_{z}}\equiv H_{z}$ if
$\sigma=0$.

A similar process, it implies
\begin{equation}\label{3.7}
(\partial_{t}+\sigma)E_{y}+\partial_{x}H_{z}-\partial_{z}\widetilde{H_{x}}=0.
\end{equation}

Similarly, it follows from introducing a new variable
$\widetilde{E_{z}}$ satisfying
\begin{equation}\label{3.8}
(\partial_{t}+\sigma)E_{z}=\partial_{t}\widetilde{E_{z}},
\end{equation}
that
\begin{eqnarray}
(\partial_{t}+\sigma)H_{x}-\partial_{z}E_{y}+\partial_{y}\widetilde{E_{z}}&=&0, \label{3.9}\\
(\partial_{t}+\sigma)H_{y}+\partial_{z}E_{x}-\partial_{x}\widetilde{E_{z}}&=&0.\label{3.10}
\end{eqnarray}

In order to make the calculations of
 the next sections more readable, it is useful to
adopt new notations for $E_{z}$ and $\widetilde{E_{z}}$, $H_{z}$ and
$\widetilde{H_{z}}$.\\

 $\bullet$ $E_{z}$ is denoted by $E_{z}^{\ast}$, $\widetilde{E_{z}}$ is denoted by $E_{z}$;\\

 $\bullet$ $H_{z}$ is denoted by $H_{z}^{\ast}$, $\widetilde{H_{z}}$ is denoted by $H_{z}$.\\

 Then the un-splitting formulation (\ref{3.a})-(\ref{3.l}) can be rewritten as
\begin{equation}\label{3.11}
\left\{
\begin{array}{ccccccccc}
(\partial_{t}+\sigma)E_{x}+\partial_{z}H_{y}-\partial_{y}H_{z}=0,~~~~(a)\\
(\partial_{t}+\sigma)E_{y}+\partial_{x}H_{z}-\partial_{z}H_{x}=0,~~~~(b)\\
\partial_{t}E_{z}^{\ast}+\partial_{y}H_{x}-\partial_{x}H_{y}=0,~~~~~~~~~~~~~(c)\\
(\partial_{t}+\sigma)E_{z}^{\ast}=\partial_{t}E_{z},~~~~~~~~~~~~~~~~~~~~~~(d)\\
(\partial_{t}+\sigma)H_{x}+\partial_{y}E_{z}-\partial_{z}E_{y}=0,~~~~(e)\\
(\partial_{t}+\sigma)H_{y}+\partial_{z}E_{x}-\partial_{x}E_{z}=0,~~~~(f)\\
\partial_{t}H_{z}^{\ast}+\partial_{x}E_{y}-\partial_{y}E_{x}=0,~~~~~~~~~~~~~(g)\\
(\partial_{t}+\sigma)H_{z}^{\ast}=\partial_{t}H_{z}.~~~~~~~~~~~~~~~~~~~~~(h)\\
\end{array}\right.
\end{equation}

\section{Energy evolution of the multi-symplectic methods for 3D Maxwell equations}
\label{sec:5}In this section, we apply two multi-symplectic methods
to discrete
3D Maxwell equations.\\

1) Multi-symplectic Yee method.

We introduce the difference operators
 ($k=n$, or $k=n+\frac{1}{2}$)
\begin{eqnarray}
(D_{\Delta t}U)^{k}&=&\frac{U^{k+\frac{1}{2}}-U^{k-\frac{1}{2}}}{\Delta t},~~(D_{\Delta x}U)_{k}=\frac{U_{k+\frac{1}{2}}-U_{k-\frac{1}{2}}}{\Delta x},\nonumber\\
(D_{\Delta y}U)_{k}&=&\frac{U_{k+\frac{1}{2}}-U_{k-\frac{1}{2}}}{\Delta y},~~(D_{\Delta z}U)_{k}=\frac{U_{k+\frac{1}{2}}-U_{k-\frac{1}{2}}}{\Delta z},\nonumber\\
\overline{U}^{k}&=&\frac{U^{k+\frac{1}{2}}+U^{k-\frac{1}{2}}}{2}, \nonumber\\
(D_{\Delta t}^{\sigma}U)_{\alpha, \beta, \gamma}^{n}&=&\frac{U^{n+\frac{1}{2}}_{\alpha,\beta,\gamma}-U^{n-\frac{1}{2}}_{\alpha,\beta,\gamma}}{\Delta t}
+\sigma\frac{U_{\alpha, \beta, \gamma}^{n+\frac{1}{2}}+U_{\alpha, \beta, \gamma}^{n-\frac{1}{2}}}{2}\nonumber\\
&=&
(D_{\Delta t}U)_{\alpha, \beta,
\gamma}^{n}+\sigma\overline{U}_{\alpha, \beta, \gamma}^{n}. \label{4.3}
\end{eqnarray}

With these notations, multi-symplectic Yee method for (\ref{3.11}) is
\begin{small}
\begin{eqnarray}
(D_{\Delta t}^{\sigma}E_{x})_{i+1/2,j,k}^{n+1/2}+(D_{\Delta
z}H_{y})_{i+1/2,j,k}^{n+1/2}&=&(D_{\Delta
y}H_{z})_{i+1/2,j,k}^{n+1/2},\label{dis1}\\
(D_{\Delta t}^{\sigma}E_{y})_{i,j+1/2,k}^{n+1/2}+(D_{\Delta
x}H_{z})_{i,j+1/2,k}^{n+1/2}&=&(D_{\Delta
z}H_{x})_{i,j+1/2,k}^{n+1/2},\label{dis2}\\
(D_{\Delta t}E_{z}^{{\ast}})_{i,j,k+1/2}^{n+1/2}+(D_{\Delta y}H_{x})_{i,j,k+1/2}^{n+1/2}&=&(D_{\Delta x}H_{y})_{i,j,k+1/2}^{n+1/2},\label{dis3}\\
(D_{\Delta t}^{\sigma}E_{z}^{{\ast}})_{i,j,k+1/2}^{n+1/2}&=&(D_{\Delta
t}E_{z})_{i,j,k+1/2}^{n+1/2},\label{dis4}\\
(D_{\Delta t}^{\sigma}H_{x})_{i,j+1/2,k+1/2}^{n}+(D_{\Delta
y}E_{z})_{i,j+1/2,k+1/2}^{n}&=&(D_{\Delta
z}E_{y})_{i,j+1/2,k+1/2}^{n},\label{dis5}\\
(D_{\Delta t}^{\sigma}H_{y})_{i+1/2,j,k+1/2}^{n}+(D_{\Delta
z}E_{x})_{i+1/2,j,k+1/2}^{n}&=&(D_{\Delta
x}E_{z})_{i+1/2,j,k+1/2}^{n},\label{dis6}\\
(D_{\Delta t}H_{z}^{{\ast}})_{i+1/2,j+1/2,k}^{n}+(D_{\Delta x}E_{y})_{i+1/2,j+1/2,k}^{n}&=&(D_{\Delta y}E_{x})_{i+1/2,j+1/2,k}^{n},\label{dis7}\\
(D_{\Delta t}^{\sigma}H_{z}^{{\ast}})_{i+1/2,j+1/2,k}^{n}&=&(D_{\Delta
t}H_{z})_{i+1/2,j+1/2,k}^{n}.\label{dis8}
\end{eqnarray}
\end{small}

We define the one-dimensional (1D) discrete scalar product
$$(U,V)_{h}=\sum_{\alpha}U_{\alpha}V_{\alpha},~~~~~~\forall(U, V)\in (l^{2}(\alpha))^{2},$$\\
where $\alpha$ is either an integer or a half-integer. The 3D
discrete scalar product will be denoted $(((U, V)))_{h}$ (or when
needed with an index $l^{2}(\alpha)\times l^{2}(\beta)\times
l^{2}(\gamma)$).

Discrete integrations by parts yields
\begin{small}
\begin{eqnarray}\label{4.13}
\left\{
\begin{array}{ccccc}
(D_{\Delta x}U,V)_{l^{2}(\alpha)}&=&-(U,D_{\Delta
x}V)_{l^{2}(\alpha+1/2)}, \forall U\in l^{2}(\alpha+1/2), V\in l^{2}(\alpha),\label{lemma01}\\
(D_{\Delta y}U,V)_{l^{2}(\beta)}&=&-(U,D_{\Delta
y}V)_{l^{2}(\beta+1/2)},\forall U\in l^{2}(\beta+1/2), V\in
l^{2}(\beta),\label{02}\\
(D_{\Delta z}U,V)_{l^{2}(\gamma)}&=&-(U,D_{\Delta
z}V)_{l^{2}(\gamma+1/2)},\forall U\in l^{2}(\gamma+1/2), V\in
l^{2}(\gamma),\label{03}
\end{array}\right.
\end{eqnarray}
\end{small}
\begin{small}
\begin{eqnarray}\label{4.14}
((((D_{\Delta
t}U)^{n+1/2},\frac{U^{n}+U^{n+1}}{2})))_{h}\nonumber\\
=\frac{1}{2\Delta
t}(\|U^{n+1}\|_{h}^{2}-\|U^{n}\|_{h}^{2}),~~\forall U\in
l^{2}(\alpha)\times l^{2}(\beta)\times l^{2}(\gamma).
\end{eqnarray}
\end{small}

Since $\sigma$ is constant,
\begin{eqnarray}
D_{\Delta t}^{\sigma}D_{\Delta t}&=&D_{\Delta t}D_{\Delta t}^{\sigma},\label{4.15}\\
D_{\Delta t}^{\sigma}D_{\Delta x}&=&D_{\Delta x}D_{\Delta t}^{\sigma},\label{4.16}\\
D_{\Delta t}^{\sigma}D_{\Delta y}&=&D_{\Delta y}D_{\Delta t}^{\sigma},\label{4.17}\\
D_{\Delta t}^{\sigma}D_{\Delta z}&=&D_{\Delta z}D_{\Delta t}^{\sigma}.\label{4.18}
\end{eqnarray}

\begin{theorem}\label{them4.1}

For any integer $n\geq0$, let
$\textbf{E}_{i,j,k}^{n}=(E_{x_{i,j,k}}^{n}, E_{y_{i,j,k}}^{n},
E_{z_{i,j,k}}^{n} )$ and $\textbf{H}_{i,j,k}^{n}=(H_{x_{i,j,k}}^{n},
H_{y_{i,j,k}}^{n}, H_{z_{i,j,k}}^{n})$ be the solution of
(\ref{dis1})-(\ref{dis8}), then the discrete version of the energy evolution
law is,
\begin{eqnarray}\label{4.19}
\varepsilon_{1}^{n+1/2}&-&\varepsilon_{1}^{n-1/2}+2\sigma \|\overline{D_{\Delta t}E_{x}^{n}}\|_{h}^{2}+2\sigma \|\overline{D_{\Delta t}E_{y}^{n}}\|_{h}^{2}\nonumber\\
&+&\sigma (((\overline{\overline{D_{\Delta x}E_{y}^{n}}}-\overline{\overline{D_{\Delta y}E_{x}^{n}}},D_{\Delta t}^{\sigma}H_{z}^{n})))_{h}=0,
\end{eqnarray}
where
\begin{small}
$$\varepsilon_{1}^{n+1/2}=\frac{1}{2\Delta t}\Big(\|(D_{\Delta t}E_{x})^{n+1/2}\|_{h}^{2}+\|(D_{\Delta t}E_{y})^{n+1/2}+\|(D_{\Delta t}E_{z})^{n+1/2}\|_{h}^{2}~~~~~~~~~~~~~~$$
$$+\|\sigma \frac{E_{x}^{n}+E_{x}^{n+1}}{2}\|_{h}^{2}+\|\sigma \frac{E_{y}^{n}+E_{y}^{n+1}}{2}\|_{h}^{2}$$$$+((((D_{\Delta t}^{\sigma}H_{x})^{n+1},(D_{\Delta t}^{\sigma}H_{x})^{n})))_{h})+((((D_{\Delta t}^{\sigma}H_{y})^{n+1},(D_{\Delta t}^{\sigma}H_{y})^{n})))_{h}\Big).$$\\
\end{small}
\end{theorem}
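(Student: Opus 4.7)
The plan is to derive the energy identity by a discrete energy method, testing each equation in (\ref{dis1})--(\ref{dis8}) against a suitable time derivative of its left-hand variable and summing the results. The key algebraic tools are the discrete integration-by-parts formulas (\ref{4.13}), the telescoping identity (\ref{4.14}), and the commutation relations (\ref{4.15})--(\ref{4.18}); these are the discrete counterparts of the computations one would perform at the continuous level to obtain Energy II.

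First I would apply $D_{\Delta t}$ to the four evolution equations (\ref{dis1}), (\ref{dis2}), (\ref{dis5}), (\ref{dis6}), using (\ref{4.15})--(\ref{4.18}) so that each spatial difference operator can be pulled through $D_{\Delta t}$ or $D_{\Delta t}^{\sigma}$. I would then test the resulting four identities, in the 3D discrete inner product $(((\cdot,\cdot)))_h$, against appropriately time-averaged quantities of the form $\overline{D_{\Delta t}E_x}$, $\overline{D_{\Delta t}E_y}$, $\overline{D_{\Delta t}^{\sigma}H_x}$, $\overline{D_{\Delta t}^{\sigma}H_y}$ (each at the appropriate staggered grid location). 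Via (\ref{4.14}) each inertial term produces a telescoping contribution of the form $\frac{1}{2\Delta t}(\|\cdot\|^{n+1}-\|\cdot\|^{n})$, supplying the first four summands and the $H_x,H_y$ cross-time products inside $\varepsilon_1^{n+1/2}-\varepsilon_1^{n-1/2}$.

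Next I would use (\ref{4.13}) to integrate by parts all spatial cross-terms. The pairs arising from $E_x$ against $H_y,H_z$, from $E_y$ against $H_x,H_z$, and symmetrically for $H_x,H_y$ against $E_y,E_z,E_x$ combine into products of time derivatives and curls that largely cancel by the antisymmetry of the curl structure in the multi-symplectic form (\ref{2.1.7}). The residual terms involve $D_{\Delta t}E_z$ and $D_{\Delta t}H_z$; to close these I would invoke the auxiliary relations (\ref{dis4}) and (\ref{dis8}), writing $D_{\Delta t}E_z=D_{\Delta t}^{\sigma}E_z^{\ast}=D_{\Delta t}E_z^{\ast}+\sigma\overline{E_z^{\ast}}$ and similarly for $H_z$, and then substituting (\ref{dis3}), (\ref{dis7}) to eliminate $D_{\Delta t}E_z^{\ast}$ and $D_{\Delta t}H_z^{\ast}$ in favour of spatial differences of $H_x,H_y$ and $E_x,E_y$. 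This step reproduces the $\|D_{\Delta t}E_z\|_h^2$ contribution, while the leftover piece carrying the explicit $\sigma$ factor from $D_{\Delta t}^{\sigma}H_z^{\ast}$ will, after two successive time-averagings, become exactly the residual $\sigma(((\overline{\overline{D_{\Delta x}E_y}}-\overline{\overline{D_{\Delta y}E_x}},\,D_{\Delta t}^{\sigma}H_z)))_h$ in (\ref{4.19}).

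Finally, expanding each $D_{\Delta t}^{\sigma}=D_{\Delta t}+\sigma\,\overline{(\cdot)}$ that appears during the testing yields the absorption contributions $2\sigma\|\overline{D_{\Delta t}E_x}\|_h^2+2\sigma\|\overline{D_{\Delta t}E_y}\|_h^2$, together with the storage terms $\|\sigma\,\overline{E_x}\|_h^2$ and $\|\sigma\,\overline{E_y}\|_h^2$ inside $\varepsilon_1^{n+1/2}$. The main obstacle I anticipate is bookkeeping rather than conceptual depth: every variable lives on a different staggered Yee location, so each application of (\ref{4.13}) must be checked in the correct $l^2(\alpha)\times l^2(\beta)\times l^2(\gamma)$ space, and one must verify that the two successive time-averagings introduced by substituting (\ref{dis4}) and (\ref{dis8}) reproduce precisely the double-bar operator $\overline{\overline{\cdot}}$ appearing in the residual. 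Confirming that the spatial indices match up after every commutation of $D_{\Delta t}^{\sigma}$ with $D_{\Delta x},D_{\Delta y},D_{\Delta z}$, and that no additional boundary-like terms survive under discrete integration by parts on the periodic/enclosed Yee grid, will be the most delicate part of the argument.
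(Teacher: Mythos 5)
Your proposal is correct in outline and follows essentially the same route as the paper's proof: time-difference each of (\ref{dis1})--(\ref{dis8}), test against $\overline{(D_{\Delta t}E_x)}^n$, $\overline{(D_{\Delta t}E_y)}^n$, $\overline{(D_{\Delta t}E_z)}^n$, $(D_{\Delta t}^{\sigma}H_x)^n$, $(D_{\Delta t}^{\sigma}H_y)^n$, $(D_{\Delta t}^{\sigma}H_z)^n$, eliminate the starred variables via (\ref{dis4}) and (\ref{dis8}), integrate by parts with (\ref{4.13}), and telescope with (\ref{4.14}). The one point to tighten is that the paper applies $D_{\Delta t}^{\sigma}$ (not $D_{\Delta t}$) to (\ref{dis1}) and (\ref{dis2}), since expanding $((D_{\Delta t}^{\sigma})^2E_x)^n=(D_{\Delta t}^2E_x)^n+2\sigma\overline{(D_{\Delta t}E_x)}^n+\sigma^2\tfrac{\overline{E_x}^{n+1/2}+\overline{E_x}^{n-1/2}}{2}$ is exactly what produces the factor $2\sigma$ and the $\sigma^2$ storage terms you correctly anticipate in your final paragraph.
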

\begin{proof}
We divide it into seven parts\\

\textbf{(i)} By applying $D_{\Delta t}^{\sigma}$ to (\ref{dis1}), we
get
\begin{equation}\label{4.20}
((D_{\Delta t}^{\sigma})^{2}E_{x})^{n}+(D_{\Delta z}D_{\Delta t}^{\sigma}H_{y})^{n}-(D_{\Delta y}D_{\Delta t}^{\sigma}H_{z})^{n}=0.
\end{equation}
We multiply (\ref{4.20}) with $\overline{(D_{\Delta t}E_{x})}^{n}$ to get
\begin{small}
\begin{equation}\label{4.21}
(((((D_{\Delta t}^{\sigma})^{2}E_{x})^{n}+(D_{\Delta z}D_{\Delta t}^{\sigma}H_{y})^{n}-(D_{\Delta y}D_{\Delta t}^{\sigma}H_{z})^{n},\overline{(D_{\Delta t}E_{x})}^{n})))_{h}=0.
\end{equation}
\end{small}

\textbf{(ii)} By applying $D_{\Delta t}^{\sigma}$ to (\ref{dis2}).
Since $\sigma$ is constant, we get
\begin{equation}\label{4.22}
((D_{\Delta t}^{\sigma})^{2}E_{y})^{n}+(D_{\Delta x}D_{\Delta t}^{\sigma}H_{z})^{n}-(D_{\Delta z}D_{\Delta t}^{\sigma}H_{x})^{n}=0.
\end{equation}
Multiplying (\ref{4.22}) with $\overline{(D_{\Delta t}E_{y})}^{n}$ yields
\begin{small}
\begin{equation}\label{4.23}
(((((D_{\Delta t}^{\sigma})^{2}E_{y})^{n}+(D_{\Delta x}D_{\Delta t}^{\sigma}H_{z})^{n}-(D_{\Delta z}D_{\Delta t}^{\sigma}H_{x})^{n},\overline{(D_{\Delta t}E_{y})}^{n})))_{h}=0.
\end{equation}
\end{small}

\textbf{(iii)} After applying $D_{\Delta t}^{\sigma}$ to (\ref{dis3}),
then the equation (\ref{dis3}) be shift to time $n$
\begin{equation}
D_{\Delta t}^{\sigma}D_{\Delta t}E_{z}^{\ast}+D_{\Delta t}^{\sigma}D_{\Delta y}H_{x}-D_{\Delta t}^{\sigma}D_{\Delta x}H_{y}=0.\nonumber
\end{equation}
 Using (\ref{4.17}),(\ref{4.18}) and (\ref{dis4}), this is equivalent to
 \begin{equation}\label{4.24}
(D_{\Delta t}^{2}E_{z})^{n}+(D_{\Delta y}D_{\Delta t}^{\sigma}H_{x})^{n}-(D_{\Delta x}D_{\Delta t}^{\sigma}H_{y})^{n}=0. \end{equation}
We multiply (\ref{4.24}) with $\overline{(D_{\Delta t}E_{z})}^{n}$ to get
\begin{small}
\begin{equation}\label{4.25}
((((D_{\Delta t}^{2}E_{z})^{n}+(D_{\Delta y}D_{\Delta t}^{\sigma}H_{x})^{n}-(D_{\Delta x}D_{\Delta t}^{\sigma}H_{y})^{n},\overline{(D_{\Delta t}E_{z})}^{n})))_{h}=0.
\end{equation}
\end{small}

\textbf{(iv)} Equation (\ref{dis5}) is written as (\ref{dis1}),(\ref{dis2}) and
(\ref{dis3}) not at time $n+1/2$ but at time $n$. It is thus necessary to
consider the mean-value of (\ref{dis5}) at $n$ and $n+1$:
\begin{equation}
\overline{(D_{\Delta t}^{\sigma}H_{x})}^{n+1/2}+\overline{(D_{\Delta y}E_{z})}^{n+1/2}-\overline{(D_{\Delta z}E_{y})}^{n+1/2}=0.\nonumber
\end{equation}
We apply $D_{\Delta t}$ to this equation and multiply it by
$(D_{\Delta t}^{\sigma}H_{x})^{n}$ to get
\begin{small}
\begin{eqnarray}\label{4.26}
(((\overline{D_{\Delta t}(D_{\Delta t}^{\sigma}H_{x})}^{n+1/2}&+&\overline{(D_{\Delta y}D_{\Delta t}E_{z})}^{n+1/2}\nonumber\\
&-&\overline{(D_{\Delta z}D_{\Delta t}E_{y})}^{n+1/2},(D_{\Delta t}^{\sigma}H_{x})^{n})))_{h}=0.
\end{eqnarray}
\end{small}

\textbf{(v)} In the same way, we consider the mean-value of (\ref{dis6}) at
$n$ and $n+1$:
\begin{equation}
\overline{(D_{\Delta t}^{\sigma}H_{y})}^{n+1/2}+\overline{(D_{\Delta z}E_{x})}^{n+1/2}-\overline{(D_{\Delta x}E_{z})}^{n+1/2}=0.\nonumber
\end{equation}
We apply $D_{\Delta t}$ to this equation and multiply it by
$(D_{\Delta t}^{\sigma}H_{y})^{n}$ to get
\begin{small}
\begin{eqnarray}\label{4.27}
(((\overline{D_{\Delta t}(D_{\Delta t}^{\sigma}H_{y})}^{n+1/2}&+&\overline{(D_{\Delta z}D_{\Delta t}E_{x})}^{n+1/2}\nonumber\\
&-&\overline{(D_{\Delta x}D_{\Delta t}E_{z})}^{n+1/2},(D_{\Delta t}^{\sigma}H_{y})^{n})))_{h}=0.
\end{eqnarray}
\end{small}

\textbf{(vi)} Let us consider the mean-value of (\ref{dis7}) at $n$
and $n+1$:
\begin{equation}
\overline{(D_{\Delta t}^{\sigma}H_{z}^{\ast})}^{n+1/2}+\overline{(D_{\Delta x}E_{y})}^{n+1/2}-\overline{(D_{\Delta
y}E_{x})}^{n+1/2}=0.\nonumber
\end{equation}

 Using (\ref{4.16}), (\ref{4.17}) and (\ref{dis8}), this is
equivalent to
\begin{equation}\label{4.28}
\overline{(D_{\Delta t}^{2}H_{z})}^{n+1/2}+\overline{(D_{\Delta x}E_{y})}^{n+1/2}-\overline{(D_{\Delta y}E_{x})}^{n+1/2}=0.
\end{equation}

Then, we multiply (\ref{4.28}) with $D_{\Delta t}^{\sigma}H_{z}^{n}$ to
get
\begin{small}
\begin{equation}\label{4.29}
(((\overline{(D_{\Delta t}^{2}H_{z})}^{n+1/2}+\overline{(D_{\Delta x}E_{y})}^{n+1/2}-\overline{(D_{\Delta y}E_{x})}^{n+1/2},D_{\Delta t}^{\sigma}H_{z}^{n})))_{h}.
\end{equation}
\end{small}

\textbf{(vii)} Adding (\ref{4.21}), (\ref{4.23}), (\ref{4.25}), (\ref{4.26}), (\ref{4.27}) and
(\ref{4.29}). Due to (\ref{4.13}) we obtain that
\begin{small}
\begin{eqnarray}\label{4.30}
&&(((((D_{\Delta t}^{\sigma})^{2}E_{x})^{n},\overline{(D_{\Delta t}E_{x})}^{n})))_{h}+(((((D_{\Delta t}^{\sigma})^{2}E_{y})^{n},\overline{(D_{\Delta t}E_{y})}^{n})))_{h}\nonumber\\[2mm]
&+&((((\overline{(D_{\Delta t}D_{\Delta t}^{\sigma}H_{x})}^{n},(D_{\Delta t}^{\sigma}H_{x})^{n}))))_{h}+((((\overline{(D_{\Delta t}D_{\Delta t}^{\sigma}H_{y})}^{n},(D_{\Delta t}^{\sigma}H_{y})^{n}))))_{h}\nonumber\\[2mm]
&+&(((D_{\Delta t}^{2}E_{z})^{n},\overline{(D_{\Delta t}E_{z})}^{n}))_{h}+\sigma (((\overline{\overline{D_{\Delta x}E_{y}^{n}}}-\overline{\overline{D_{\Delta y}E_{x}^{n}}},D_{\Delta t}^{\sigma}H_{z}^{n})))_{h}=0.
\end{eqnarray}
\end{small}

 Thus, we put it into six parts, just as follows:
  \begin{eqnarray}\label{4.31}
\left\{
\begin{array}{ccccc}
S^{1}&+&S^{2}+S^{3}+S^{4}+S^{5}+S^{6}=0,\\[2mm]
S^{1}&=&(((D_{\Delta t}^{2}E_{z})^{n},\overline{(D_{\Delta t}E_{z})}^{n}))_{h},\\[2mm]
S^{2}&=&((((D_{\Delta t}^{\sigma})^{2}E_{x})^{n},\overline{(D_{\Delta t}E_{x})}^{n}))_{h},\\[2mm]
S^{3}&=&((((D_{\Delta t}^{\sigma})^{2}E_{y})^{n},\overline{(D_{\Delta t}E_{y})}^{n}))_{h},\\[2mm]
S^{4}&=&((\overline{(D_{\Delta t}D_{\Delta
t}^{\sigma}H_{x})}^{n},(D_{\Delta t}^{\sigma}H_{x})^{n}))_{h},\\[2mm]
S^{5}&=&((\overline{(D_{\Delta t}D_{\Delta
t}^{\sigma}H_{y})}^{n},(D_{\Delta t}^{\sigma}H_{y})^{n}))_{h},\\[2mm]
S^{6}&=&\sigma (((\overline{\overline{D_{\Delta
x}E_{y}^{n}}}-\overline{\overline{D_{\Delta y}E_{x}^{n}}},D_{\Delta
t}^{\sigma}H_{z}^{n})))_{h}.
\end{array}\right.
\end{eqnarray}

We now calculate $S^{j}(j=1, 2, 3,4,5)$ explicitly. From (\ref{4.14}), it
is straightforward to get
\begin{equation}\label{4.32}
S^{1}=\frac{1}{2\Delta t}\Big(\|(D_{\Delta t}E_{z})^{n+1/2}\|_{h}^{2}-\|(D_{\Delta t}E_{z})^{n-1/2}\|_{h}^{2}\Big). \end{equation}

There is also no difficulty to rewrite the fourth and fifth terms as
\begin{small}
\begin{eqnarray}
S^{4}=\frac{1}{2\Delta t}(((((D_{\Delta t}^{\sigma}H_{x})^{n+1},(D_{\Delta t}^{\sigma}H_{x})^{n})))_{h}-((((D_{\Delta t}^{\sigma}H_{x})^{n},(D_{\Delta t}^{\sigma}H_{x})^{n-1})))_{h}),\nonumber\\[2mm]
S^{5}=\frac{1}{2\Delta t}(((((D_{\Delta t}^{\sigma}H_{y})^{n+1},(D_{\Delta t}^{\sigma}H_{y})^{n})))_{h}-((((D_{\Delta t}^{\sigma}H_{y})^{n},(D_{\Delta t}^{\sigma}H_{y})^{n-1})))_{h}).\nonumber
\end{eqnarray}
\end{small}

Concerning the second term, we first develop
\begin{eqnarray}\label{p23}
((D_{\Delta t}^{\sigma})^{2}E_{x})^{n}&=& (D_{\Delta t}^{\sigma}D_{\Delta t}E_{x})^{n}+\sigma \overline{(D_{\Delta
t}^{\sigma}E_{x})}^{n}\nonumber\\
&=&(D_{\Delta t}^{2}E_{x})^{n}+2\sigma \overline{(D_{\Delta
t}E_{x})}^{n}+\sigma^{2}\frac{\overline{E_{x}}^{n+1/2}+\overline{E_{x}}^{n-1/2}}{2}.\nonumber
\end{eqnarray}

We multiply this expression with $\overline{(D_{\Delta
t}E_{x})}^{n}$, and rearrange the last term
\begin{eqnarray}
&\sigma^{2}&((\frac{\overline{E_{x}}^{n+1/2}+\overline{E_{x}}^{n-1/2}}{2},\overline{(D_{\Delta t}E_{x})}^{n}))_{h}\nonumber\\
&=&\frac{\sigma^{2}}{2\Delta t}(\|\overline{E_{x}}^{n+1/2}\|_{h}^{2}-\|\overline{E_{x}}^{n-1/2}\|_{h}^{2}).\nonumber
\end{eqnarray}
and get
\begin{small}
\begin{eqnarray}\label{4.35}
S^{2}&=&\frac{1}{2\Delta t}(\|(D_{\Delta t}E_{x})^{n+1/2}\|_{h}^{2}-\|(D_{\Delta
t}E_{x})^{n-1/2}\|_{h}^{2}\nonumber\\
&+&\sigma^{2}\|\overline{E_{x}}^{n+1/2}\|_{h}^{2}-\sigma^{2}\|\overline{E_{x}}^{n-1/2}\|_{h}^{2})+2\sigma
\|\overline{(D_{\Delta t}E_{x})}^{n}\|_{h}^{2}.
\end{eqnarray}
\end{small}

The same as $S^{3}$, we can get
\begin{small}
\begin{eqnarray}\label{4.36}
S^{3}&=&\frac{1}{2\Delta t}(\|(D_{\Delta t}E_{y})^{n+1/2}\|_{h}^{2}-\|(D_{\Delta
t}E_{y})^{n-1/2}\|_{h}^{2}\nonumber\\
&+&\sigma^{2}\|\overline{E_{y}}^{n+1/2}\|_{h}^{2}-\sigma^{2}\|\overline{E_{y}}^{n-1/2}\|_{h}^{2})+2\sigma
\|\overline{(D_{\Delta t}E_{y})}^{n}\|_{h}^{2}.
\end{eqnarray}
\end{small}

 Due to $S^{1}+S^{2}+S^{3}+S^{4}+S^{5}+S^{6}=0$, then
we can obtain the energy evolution law (\ref{4.19}).
 The proof
is finished.
\end{proof}

2) General multi-symplectic Runge-Kutta methods.

The Runge-Kutta methods for Maxwell equations (\ref{2.1.7}) are
\begin{eqnarray}
Z_{i,j,k,n}&=&z^{0}_{i,j,k}+\Delta t\sum^{r}_{s=1}a_{n,s}\partial_{t}Z_{i,j,k,s}, \label{4.37.a}\\
z^{1}_{i,j,k}&=&z^{0}_{i,j,k}+\Delta t\sum^{r}_{s=1}b_{s}\partial_{t}Z_{i,j,k,s}, \label{4.37.b}\\
Z_{i,j,k,n}&=&z^{n}_{0,j,k}+\Delta x\sum^{r_{1}}_{u=1}\overline{a}_{i,u}\partial_{x}Z_{u,j,k,n}, \label{4.37.c}\\
z^{n}_{1,j,k}&=&z^{n}_{0,j,k}+\Delta x\sum^{r_{1}}_{u=1}\overline{b}_{u}\partial_{x}Z_{u,j,k,n}, \label{4.37.d}\\
Z_{i,j,k,n}&=&z^{n}_{i,0,k}+\Delta y\sum^{r_{2}}_{v=1}\widetilde{a}_{j,v}\partial_{y}Z_{i,v,k,n}, \label{4.37.e}\\
z^{n}_{i,1,k}&=&z^{n}_{i,0,k}+\Delta y\sum^{r_{2}}_{v=1}\widetilde{b}_{v}\partial_{y}Z_{i,v,k,n}, \label{4.37.f}\\
Z_{i,j,k,n}&=&z^{n}_{i,j,0}+\Delta z\sum^{r_{3}}_{w=1}\widehat{a}_{k,w}\partial_{z}Z_{i,j,w,n}, \label{4.37.g}\\
z^{n}_{i,j,1}&=&z^{n}_{i,j,0}+\Delta z\sum^{r_{3}}_{w=1}\widehat{b}_{w}\partial_{z}Z_{i,j,w,n}. \label{4.37.h}
\end{eqnarray}
\begin{theorem}[Hong et al. 2005]\label{them4.2}
If in the methods (\ref{4.37.a})-(\ref{4.37.h})
\begin{eqnarray}
b_{s}b_{n}-b_{s}a_{s,n}-b_{n}a_{n,s}&=&0, \label{4.38}\\
\overline{b}_{u}\overline{b}_{i}-\overline{b}_{u}\overline{a}_{u,i}-\overline{b}_{i}\overline{a}_{i,u}&=&0, \label{4.39}\\
\widetilde{b}_{v}\widetilde{b}_{j}-\widetilde{b}_{v}\widetilde{a}_{v,j}-\widetilde{b}_{j}\widetilde{a}_{j,v}&=&0,\label{4.40}
\end{eqnarray}
and
\begin{equation}\label{4.41}
\widehat{b}_{w}\widehat{b}_{k}-\widehat{b}_{w}\widehat{a}_{w,k}-\widehat{b}_{k}\widehat{a}_{k,w}=0,
\end{equation}
hold for $s,n=1,2,...,r$, $u,i=1,2,...,r_{1}$, $v,j=1,2,...,r_{2}$
and $w,k=1,2,...,r_{3}$, then the method (\ref{4.37.a})-(\ref{4.37.h}) is
multi-symplectic.
\end{theorem}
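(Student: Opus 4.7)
The plan is to adapt the classical Lasagni--Sanz-Serna--Suris proof of symplecticity of Runge--Kutta methods (one direction at a time) to the multi-symplectic setting, following the strategy used for general multi-symplectic Hamiltonian PDEs. First, I would form the variational equation associated with (\ref{2.1.7}),
$$M\,(dZ)_{t}+\sum_{p=1}^{3} K_{p}\,(dZ)_{x_{p}}=0,$$
to which the same Runge--Kutta discretization (\ref{4.37.a})--(\ref{4.37.h}) applies verbatim in each of the four variables. The goal is then to verify the discrete multi-symplectic conservation law (\ref{2.1.10}) at the level of a single space--time cell built from the stage mesh.

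Next, working in the time direction alone, I would compute
$$dz^{1}\wedge M\,dz^{1}-dz^{0}\wedge M\,dz^{0}$$
by expanding $dz^{1}$ via (\ref{4.37.b}) and then substituting $dz^{0}=dZ_{s}-\Delta t\sum_{n} a_{s,n}\,d\partial_{t}Z_{n}$ from (\ref{4.37.a}) in every resulting cross term. Collecting terms, one obtains a contribution linear in $dZ_{s}\wedge M\,d\partial_{t}Z_{s}$ with coefficient $b_{s}$, plus a contribution quadratic in $d\partial_{t}Z_{s}$ and $d\partial_{t}Z_{n}$ whose coefficient is precisely $b_{s}b_{n}-b_{s}a_{s,n}-b_{n}a_{n,s}$. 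Condition (\ref{4.38}) annihilates the quadratic part and leaves the clean stage identity
$$dz^{1}\wedge M\,dz^{1}-dz^{0}\wedge M\,dz^{0}=2\Delta t\sum_{s} b_{s}\,dZ_{s}\wedge M\,d\partial_{t}Z_{s}.$$
Performing the identical algebraic computation in each spatial direction, using (\ref{4.39}), (\ref{4.40}), (\ref{4.41}) in place of (\ref{4.38}), produces three analogous identities in which $M,\partial_{t},\Delta t, b_{s}$ are replaced by $K_{p},\partial_{x_{p}},\Delta x_{p}$ and the appropriate spatial weights.

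Finally, I would add the four identities after tensor-producting the unused stages, so each is weighted by the other three quadrature weights. The right-hand side collapses to
$$2\sum_{s,u,v,w} b_{s}\overline{b}_{u}\widetilde{b}_{v}\widehat{b}_{w}\,dZ_{u,v,w,s}\wedge\Bigl(M\,d\partial_{t}Z+\sum_{p=1}^{3} K_{p}\,d\partial_{x_{p}}Z\Bigr)_{\!u,v,w,s},$$
and the variational equation evaluated at each interior stage $(u,v,w,s)$ forces the parenthesized factor to vanish pointwise. The left-hand side is exactly the RK-integrated cell form of (\ref{2.1.10}), so the scheme is multi-symplectic. The main obstacle is bookkeeping: the four RK schemes (\ref{4.37.a})--(\ref{4.37.h}) introduce four independent sets of stages, and the direction-by-direction argument must treat the remaining three directions as passive parameters while still producing a globally consistent four-index stage tensor on which the variational equation can be invoked simultaneously; once that setup is clean, each symplectic condition does its work in its own slot and the four contributions combine without interference.
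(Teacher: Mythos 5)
You should first note that the paper itself supplies no proof of Theorem~\ref{them4.2}: it is stated with the attribution ``Hong et al.\ 2005'' and used as a citation of the reference on multi-symplectic (partitioned) Runge--Kutta methods, so there is no in-paper argument to compare yours against. That said, your outline reproduces, essentially correctly, the standard proof given in that reference (and in the earlier Reich/Bridges--Reich line of work): apply the same Runge--Kutta stencil to the variational equation, expand $dz^{1}\wedge M\,dz^{1}-dz^{0}\wedge M\,dz^{0}$ \`a la Lasagni--Sanz-Serna--Suris, kill the quadratic stage terms with condition (\ref{4.38}), repeat with $K_{p}$ and conditions (\ref{4.39})--(\ref{4.41}) in each spatial slot, and sum the four one-directional identities against the product quadrature weights so that the discrete conservation law (\ref{2.1.10}) follows from the variational equation holding at every internal stage. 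Two details are worth making explicit if you carry this out. First, both the merging of the two cross terms into $2\Delta t\sum_{s}b_{s}\,dZ_{s}\wedge M\,d\partial_{t}Z_{s}$ and the symmetrization that produces the coefficient $b_{s}b_{n}-b_{s}a_{s,n}-b_{n}a_{n,s}$ rely on the identity $dU\wedge M\,dV=dV\wedge M\,dU$, which holds precisely because $M$ (and each $K_{p}$) is skew-symmetric; this hypothesis should be invoked, not left implicit. Second, the four directional computations are only compatible because (\ref{4.37.a})--(\ref{4.37.h}) impose all four stage relations on one and the same four-index stage tensor $Z_{i,j,k,n}$, so the discrete variational equation genuinely holds stage-by-stage; this is exactly the ``bookkeeping'' obstacle you flag, and it is the one place where a careless write-up could silently assume what needs to be checked. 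With those two points made precise, your argument is a valid proof and coincides with the one in the cited source.
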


Let $U=(E_{xy},E_{xz},E_{yz},E_{yx},E_{xy},
E_{xz},H_{xy},H_{xz},H_{yz},H_{yx},H_{xy},H_{xz})^{T}$, then the 3D
Maxwell equations with PML (\ref{3.a})-(\ref{3.l}) can be rewritten as
\begin{equation}\label{4.42}
\partial_{t}U+\Sigma U=P(\nabla)U,
\end{equation}
with
$$\Sigma=\left(\begin{array}{ccccccccccccc}
A_{6\times6}&0\\
0&A_{6\times6}\\
\end{array}\right),~~~~P(\nabla)=\left(\begin{array}{ccccccccccccc}
0&B_{6\times6}\\
-B_{6\times6}&0\\
\end{array}\right),$$
where
$$A_{6\times6}=\left(\begin{array}{ccccccc}
0&&&&&&\\
&\sigma&&&&&\\
&&\sigma&&&&\\
&&&0&&&\\
&&&&0&&\\
&&&&&0&\\
\end{array}\right),~~~~~~~~~~~$$
$$B_{6\times6}=\left(\begin{array}{ccccccccc}
&&&&\frac{\partial}{\partial y}&\frac{\partial}{\partial y}&\\
&&-\frac{\partial}{\partial z}&-\frac{\partial}{\partial z}&&&\\
\frac{\partial}{\partial z}&\frac{\partial}{\partial z}&&&&&\\
&&&&-\frac{\partial}{\partial x}&-\frac{\partial}{\partial x}&\\
&&\frac{\partial}{\partial x}&\frac{\partial}{\partial x}&&&\\
-\frac{\partial}{\partial y}&-\frac{\partial}{\partial y}&&&&&\\
\end{array}\right). $$

Based on theorem (\ref{them4.2}), we apply an $s$-stage symplectic Runge-Kutta
method to the $t$-direction,  of form (\ref{4.42}), to obtain the
following scheme,
\begin{eqnarray}
U^{n}&=&u^{0}+\Delta t\sum_{m=1}^{s}a_{nm}(-\Sigma U^{m}+P(\nabla)U^{m}),\label{4.44}\\
u^{1}&=&u^{0}+\Delta t\sum_{m=1}^{s}b_{m}(-\Sigma U^{m}+P(\nabla)U^{m}),\label{4.45}
\end{eqnarray}
where the coefficients of the equations (\ref{4.44})-(\ref{4.45}) satisfy the
following symplectic condition:
\begin{equation}\label{4.46}
b_{m}a_{mn}+b_{n}a_{nm}=b_{m}b_{n}, ~~~~ m,n=1,2,...,s.
\end{equation}

Now, we introduce the following difference operators and apply Yee
method to the $x$-direction, $y$-direction and $z$-direction
respectively, of form (\ref{4.44})-(\ref{4.45}).
\begin{eqnarray}
\delta_{x}V_{i,j,k}^{n}&=&\frac{V_{i+1/2,j,k}^{n}-V_{i-1/2,j,k}^{n}}{\Delta x},\nonumber\\
\delta_{y}V_{i,j,k}^{n}&=&\frac{V_{i,j+1/2,k}^{n}-V_{i,j-1/2,k}^{n}}{\Delta y},\nonumber\\
\delta_{z}V_{i,j,k}^{n}&=&\frac{V_{i,j,k+1/2}^{n}-V_{i,j,k-1/2}^{n}}{\Delta z}.\nonumber
\end{eqnarray}

Then, we obtain that,
\begin{eqnarray}
U_{i,j,k}^{n}&=&u_{i,j,k}^{0}+\Delta t\sum_{m=1}^{s}a_{nm}\Big(-\Sigma U_{i,j,k}^{m}+\widetilde{P(\nabla)}U_{i,j,k}^{m}\Big),\label{4.47}\\
u_{i,j,k}^{1}&=&u_{i,j,k}^{0}+\Delta t\sum_{m=1}^{s}b_{m}\Big(-\Sigma U_{i,j,k}^{m}+\widetilde{P(\nabla)}U_{i,j,k}^{m}\Big),\label{4.48}
\end{eqnarray}
with
$$\widetilde{P(\nabla)}=\left(\begin{array}{ccccccc}
&&&&\delta y&\delta y&\\
&&-\delta z&-\delta z&&&\\
\delta z&\delta z&&&&&\\
&&&&-\delta x&-\delta x&\\
&&\delta x&\delta x&&&\\
-\delta y&-\delta y&&&&&\\
\end{array}\right). $$

\begin{theorem}\label{them4.3}

For and integer $n\geq0$, set
$\textbf{E}_{i,j,k}^{n}=(E_{x_{i,j,k}}^{n}, E_{y_{i,j,k}}^{n},
E_{z_{i,j,k}}^{n})$ and $\textbf{H}_{i,j,k}^{n}=(H_{x_{i,j.k}}^{n},
H_{y_{i,j,k}}^{n}, H_{z_{i,j,k}}^{n})$ be the solution of the
discrete scheme (\ref{4.47})-(\ref{4.48}), then the discrete version of the
energy evolution law is,
\begin{small}
\begin{eqnarray}\label{4.49}
\|\textbf{E}^{n+1}\|_{h}^{2}+\|\textbf{H}^{n+1}\|_{h}^{2}&=&\|\textbf{E}^{n}\|_{h}^{2}+\|\textbf{H}^{n}\|_{h}^{2}\nonumber\\
&-&2\Delta t\sigma\sum_{m=1}^{s}b_{m}((\textbf{E}_{i,j,k}^{m})^{2}+(\textbf{H}_{i,j,k}^{m})^{2}),
\end{eqnarray}
\end{small}
where
\begin{eqnarray}
\|\textbf{E}^{n}\|_{h}^{2}&=&h_{x}h_{y}h_{z}\sum_{i=r_{1}}^{s}\sum_{j=r_{2}}^{s}\sum_{k=r_{3}}^{s}((E_{x_{i,j,k}}^{n})^{2}+(E_{y_{i,j,k}}^{n})^{2}+(E_{z_{i,j,k}}^{n})^{2}),\nonumber\\
\|\textbf{H}^{n}\|_{h}^{2}&=&h_{x}h_{y}h_{z}\sum_{i=r_{1}}^{s}\sum_{j=r_{2}}^{s}\sum_{k=r_{3}}^{s}((H_{x_{i,j,k}}^{n})^{2}+(H_{y_{i,j,k}}^{n})^{2}+(H_{z_{i,j,k}}^{n})^{2}).\nonumber
\end{eqnarray}
\end{theorem}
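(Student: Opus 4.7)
The plan is to reproduce the classical energy identity of symplectic Runge--Kutta methods and then exploit the block-skew form of the discrete curl operator to isolate the $\sigma$-damping. Set $F(U):=-\Sigma U+\widetilde{P(\nabla)}U$ so that (\ref{4.47})--(\ref{4.48}) reads
\[
U^{m}=u^{0}+\Delta t\sum_{n=1}^{s}a_{mn}F(U^{n}),\qquad u^{1}=u^{0}+\Delta t\sum_{m=1}^{s}b_{m}F(U^{m}),
\]
pointwise at each grid node $(i,j,k)$, with $u^{0}=U^{n}_{i,j,k}$ and $u^{1}=U^{n+1}_{i,j,k}$.

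The first step is to expand
\[
\|u^{1}\|_{h}^{2}-\|u^{0}\|_{h}^{2}=2\Delta t\sum_{m}b_{m}(u^{0},F(U^{m}))_{h}+\Delta t^{2}\sum_{m,n}b_{m}b_{n}(F(U^{m}),F(U^{n}))_{h},
\]
then substitute $u^{0}=U^{m}-\Delta t\sum_{n}a_{mn}F(U^{n})$ in the first sum and symmetrize in $(m,n)$. Condition (\ref{4.46}), $b_{m}a_{mn}+b_{n}a_{nm}=b_{m}b_{n}$, then annihilates every $O(\Delta t^{2})$ contribution, leaving the clean identity
\[
\|u^{1}\|_{h}^{2}-\|u^{0}\|_{h}^{2}=2\Delta t\sum_{m=1}^{s}b_{m}\,(U^{m},F(U^{m}))_{h}.
\]

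The second step is to decompose $(U^{m},F(U^{m}))_{h}=-(U^{m},\Sigma U^{m})_{h}+(U^{m},\widetilde{P(\nabla)}U^{m})_{h}$. Because $\widetilde{P(\nabla)}$ has the block form $\bigl(\begin{smallmatrix}0&B\\-B&0\end{smallmatrix}\bigr)$, the mixed term reduces to $(E,BH)_{h}-(H,BE)_{h}$, which vanishes once one checks $(E,BH)_{h}=(BE,H)_{h}$. This follows from the staggered summation-by-parts identities in (\ref{4.13}) applied entry by entry to $\delta_{x},\delta_{y},\delta_{z}$, together with the vanishing of boundary terms under the perfect-conductor closure outside the PML region. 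The third step is then algebraic: since $\Sigma=\mathrm{diag}(A_{6\times 6},A_{6\times 6})$ is diagonal with $\sigma$ on the $z$-damped slots, $(U^{m},\Sigma U^{m})_{h}$ reproduces the right-hand side of (\ref{4.49}) after the split components are recombined via $E_{x}=E_{xy}+E_{xz}$, etc., into $\textbf{E}^{m}$ and $\textbf{H}^{m}$.

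The principal obstacle is the second step: one must verify carefully that the staggered Yee derivatives $\delta_{x},\delta_{y},\delta_{z}$ satisfy summation-by-parts in the discrete inner product $(\cdot,\cdot)_{h}$ on the truncated PML domain, and that the components of $B$ transfer cleanly between $E$-slots and $H$-slots without leaving residual boundary contributions. The first step is the standard symplectic-Runge--Kutta algebra, and the third step is pure bookkeeping.
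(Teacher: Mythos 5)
Your proposal follows essentially the same route as the paper's proof: the standard symplectic Runge--Kutta expansion of $\|u^{1}\|_{h}^{2}$, substitution of $u^{0}=U^{m}-\Delta t\sum_{n}a_{mn}F(U^{n})$, cancellation of the $O(\Delta t^{2})$ terms via the symplectic condition (\ref{4.46}), annihilation of the discrete curl contribution, and identification of the remaining $\Sigma$-term with the damping in (\ref{4.49}). The only substantive difference is at the curl step, where you (correctly) frame $(U^{m},\widetilde{P(\nabla)}U^{m})_{h}=0$ as a discrete summation-by-parts identity in the inner product requiring control of boundary terms, whereas the paper simply asserts the pointwise relation $(U^{m}_{i,j,k})^{T}\widetilde{P(\nabla)}U^{m}_{i,j,k}=0$ by appeal to the continuous lossless energy law and sums over the grid only at the end; your version is the more careful reading of the same argument.
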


\begin{proof}
For simplicity, we introduce the following notation
\begin{equation}\label{4.50}
f_{i,j,k}^{m}=-\Sigma U_{i,j,k}^{m}+\widetilde{P(\nabla)}U_{i,j,k}^{m}.
\end{equation}

From the discrete scheme (\ref{4.48}), we can get that the following
relation between $u_{i,j,k}^{1}$ and $u_{i,j,k}^{0}$:
\begin{small}
\begin{eqnarray}\label{4.51}
(u_{i,j,k}^{1})^{T}u_{i,j,k}^{1}&=&(u_{i,j,k}^{0}+\Delta t\sum_{m=1}^{s}b_{m}f_{i,j,k}^{m})^{T}(u_{i,j,k}^{0}+\Delta t\sum_{m=1}^{s}b_{m}f_{i,j,k}^{m})\nonumber\\
&=&((u_{i,j,k}^{0})^{T}u_{i,j,k}^{0}+\Delta t\sum_{m=1}^{s}b_{m}(f_{i,j,k}^{m})^{T})(u_{i,j,k}^{0}+\Delta t\sum_{m=1}^{s}b_{m}f_{i,j,k}^{m})\nonumber\\
&=&(u_{i,j,k}^{0})^{T}u_{i,j}^{0}+\Delta t((u_{i,j,k}^{0})^{T}\sum_{m=1}^{s}b_{m}f_{i,j,k}^{m})\nonumber\\
&+&\Delta t(\sum_{m=1}^{s}b_{m}(f_{i,j,k}^{m})^{T}u_{i,j,k}^{0})
+(\Delta t)^{2}\sum_{m,n=1}^{s}b_{m}b_{n}(f_{i,j,k}^{m})^{T}f_{i,j,k}^{n}. \nonumber
\end{eqnarray}
\end{small}

Note that the discrete scheme (\ref{4.47}) and the notation (\ref{4.50}), it can
be rewritten as
\begin{equation}\label{4.52}
u_{i,j,k}^{0}=U_{i,j,k}^{n}-\Delta t\sum_{m=1}^{s}a_{nm}f_{i,j,k}^{m}.
\end{equation}

Then, we insert the relation between $u_{i,j,k}^{1}$ and $u_{i,j,k}^{0}$ into (\ref{4.52}), we obtain that
\begin{eqnarray}\label{4.53}
&&(u_{i,j,k}^{1})^{T}u_{i,j,k}^{1}\nonumber\\
&=&(u_{i,j,k}^{0})^{T}u_{i,j,k}^{0}+\Delta t\sum_{m=1}^{s}b_{m}((U_{i,j,k}^{n})^{T}f_{i,j}^{m}+(f_{i,j,k}^{m})^{T}U_{i,j,k}^{n})\nonumber\\
&+&(\Delta t)^{2}\sum_{m,n=1}^{s}(b_{m}b_{n}-b_{m}a_{mn}-b_{n}a_{nm})(f_{i,j,k}^{m})^{T}f_{i,j,k}^{n}.
\end{eqnarray}

Due to the symplectic condition (\ref{4.46}), we have
\begin{eqnarray}\label{4.54}
(u_{i,j,k}^{1})^{T}u_{i,j,k}^{1}=(u_{i,j,k}^{0})^{T}u_{i,j,k}^{0}&+&\Delta t\sum_{m=1}^{s}b_{m}((U_{i,j,k}^{n})^{T}f_{i,j,k}^{m}\nonumber\\
&+&(f_{i,j,k}^{m})^{T}U_{i,j,k}^{n}).
\end{eqnarray}

Since the Maxwell equations energy conserving law in lossless medium, we obtain
$$(U_{i,j,k}^{m})^{T}(\widetilde{P(\nabla)}U_{i,j,k}^{m})=0.$$

Therefore, (\ref{4.54}) becomes the following form
\begin{small}
\begin{equation}\label{4.55}
(u_{i,j,k}^{1})^{T}u_{i,j,k}^{1}=(u_{i,j,k}^{0})^{T}u_{i,j,k}^{0}-2\Delta t\sigma\sum_{m=1}^{s}(b_{m}(U_{i,j,k}^{m})^{T}\Sigma U_{i,j,k}^{m}).
\end{equation}
\end{small}

From the presentation of $\Sigma$ and $U$, we have
\begin{small}
\begin{equation}\label{4.56}
(u_{i,j,k}^{1})^{T}u_{i,j,k}^{1}=(u_{i,j,k}^{0})^{T}u_{i,j,k}^{0}-2\Delta t\sigma\sum_{m=1}^{s}b_{m}((\textbf{E}_{i,j,k}^{m})^{2}+(\textbf{H}_{i,j,k}^{m})^{2}).
\end{equation}
\end{small}

Summing all terms in the above equation (\ref{4.56}) over all spatial
indices $i$, $j$, $k$, then we can get the energy evolution law
(\ref{4.49}). The proof is finished.
\end{proof}

\nocite{*}
\bibliography{aipsamp}% Produces the bibliography via BibTeX.

\end{document}